\newenvironment{bsmallmatrix}
  {\left[\begin{smallmatrix}}
  {\end{smallmatrix}\right]}
\theoremstyle{plain}
\newtheorem{theorem}{Theorem}[section]
\newtheorem{lemma}[theorem]{Lemma}
\newtheorem{coro}[theorem]{Corollary}
\theoremstyle{definition}
\newtheorem{question}[theorem]{Question}
\newtheorem*{lproblem}{Lehmer's problem}
\newtheorem{example}[theorem]{Example}
\newtheorem{remark}[theorem]{Remark}
\newcommand{\dd}{\,\mathrm{d}}
\newcommand{\ii}{\ts\mathrm{i}\ts}
\newcommand{\ee}{\mathrm{e}}
\newcommand{\ts}{\hspace{0.5pt}}
\newcommand{\nts}{\hspace{-0.5pt}}
\newcommand{\fm}{\mathfrak{m}\ts}
\newcommand{\ZZ}{\mathbb{Z}}
\newcommand{\RR}{\mathbb{R}}
\newcommand{\NN}{\mathbb{N}}
\newcommand{\CC}{\mathbb{C}}
\newcommand{\TT}{\mathbb{T}}
\newcommand{\cC}{\mathcal{C}}
\newcommand{\cV}{\mathcal{V}}
\newcommand{\vS}{\varSigma}
\newcommand{\bs}{\boldsymbol}
\newcommand{\exend}{\hfill $\Diamond$}
\newcommand{\myfrac}[2]{\frac{\raisebox{-2pt}{$#1$}}
      {\raisebox{0.5pt}{$#2$}}}
\begin{document}

\title[Binary substitutions and Mahler measures]
{Binary constant-length substitutions and \\ [1mm] 
Mahler measures of Borwein polynomials}

%%%%% \date{\today}

\author{Michael Baake}
\address{Fakult\"at f\"ur Mathematik, Universit\"at Bielefeld, \newline
\hspace*{\parindent}Postfach 100131, 33501 Bielefeld, Germany}
\email{$\{$mbaake,cmanibo$\}$@math.uni-bielefeld.de }

\author{Michael Coons}
\address{School of Mathematical and Physical Sciences, 
   University of Newcastle, \newline
\hspace*{\parindent}University Drive, Callaghan NSW 2308, Australia}
\email{michael.coons@newcastle.edu.au}

\author{Neil Ma\~nibo} 

\dedicatory{In memory of Jonathan Michael Borwein (1951--2016)}

\begin{abstract}  
  We show that the Mahler measure of every Borwein
  polynomial{\ts}---{\ts}a polynomial with coefficients in
  $ \{-1,0,1 \}$ having non-zero constant term{\ts}---{\ts}can be
  expressed as a maximal Lyapunov exponent of a matrix cocycle that
  arises in the spectral theory of binary constant-length
  substitutions. In this way, Lehmer's problem for height-one
  polynomials having minimal Mahler measure becomes equivalent to a
  natural question from the spectral theory of binary constant-length
  substitutions. This supports another connection between Mahler measures
  and dynamics, beyond the well-known appearance of Mahler measures as
  entropies in algebraic dynamics.
\end{abstract}

\maketitle

\section{Introduction}

Let $p$ be a polynomial with complex coefficients. The
\emph{logarithmic Mahler measure} of $ p $ is defined to be the
logarithm of the geometric mean of $\lvert p \rvert$ over the unit
circle; that is,
\begin{equation}\label{eq:def-M}
   \fm(p) \, :=  \int_0^1\log \, \bigl| \ts
    p \bigl( \ee^{2\pi \ii t} \bigr) \bigr| \dd t \ts .
\end{equation}
It is well known and easily shown using Jensen's formula 
\cite[Prop.~16.1]{KS-Book} that the logarithmic Mahler
measure satisfies
\[
   \fm(p) \, = \, \log | a^{}_s| \, + \sum_{j=1}^{s} 
  \log \bigl(\max\{|\alpha^{}_j|,1\} \bigr),
\]
where $p(z)=a^{}_s \ts\prod_{j=1}^s (z-\alpha^{}_j)$; see \cite{EvWa}
for background. Here, we will only consider integer polynomials.
Kronecker's lemma \cite{K1857} then implies that $\fm(p)=0$ if and
only if $p$ is a product of a cyclotomic polynomial (not necessarily
irreducible) and a monomial. In this way, $\fm$ is a measure of the
distance of an integer polynomial to the unit circle.

One of the most interesting and long-standing problems in this area
concerns finding polynomials with small logarithmic Mahler
measures. Lehmer found the polynomial
\begin{equation}\label{eq:Lehmer}
      \ell^{}_{\mathrm{L}} (z) \, = \, 1+z-z^3-z^4-z^5-z^6-z^7+z^9+z^{10},
\end{equation}
which is irreducible and has precisely one root outside the unit
disk. This root is real and a Salem number.  The polynomial
$\ell^{}_{\mathrm{L}}$ is the polynomial with the smallest known
positive logarithmic Mahler measure,
\[
    \fm (\ell^{}_{\mathrm{L}}) \, \approx \, \log (1.176 {\ts} 281)  \ts .
\] 

\begin{lproblem} 
  Does there exist a constant $c>0$ such that any irreducible
  non-cyclotomic polynomial $p$ with integer coefficients satisfies
  $\fm(p)\geqslant c$ ?
\end{lproblem}

There are some special classes of polynomials for which Lehmer's
problem has long been \mbox{answered} in the affirmative. In
particular, there is a very interesting gap result for non-reciprocal
polynomials due to Smyth \cite{Smy71}; see also Breusch
\cite{Bre1951}. A polynomial $p$ is \emph{reciprocal} (in the wider
sense) if $p(z)= \pm z^{\deg (p)} p(1/z)$; that is, a polynomial is
reciprocal if its sequence of coefficients is palindromic, up to an
overall sign.  It follows from Smyth's result that, for non-reciprocal
polynomials, one either has $\fm (q) = 0$ or
$\fm (q) \geqslant \log (\lambda_{\mathrm{p}})$, where
$\lambda_{\mathrm{p}}$ is the smallest Pisot number, which is the real
root of $z^3 - z - 1$, also known as the \emph{plastic number}
\cite[Ex.~2.17]{TAO}.  Specialising this class a bit more, Borwein,
Hare and Mossinghoff \cite[Cor.~1.2]{BHM} showed that all
non-reciprocal polynomials $q$ with odd integer coefficients satisfy
the bound
\[
    \fm (q) \, \geqslant \, \fm (z^2 - z - 1)
   \, = \, \log (\tau) \ts ,
\] 
where $\tau = \frac{1}{2} \bigl( 1 + \sqrt{5}\, \bigr)$ is the golden
ratio, a well-known Pisot number. The golden ratio is characterised by
the property that it is the smallest limit point of Pisot numbers.
See \cite{Smyth} for a general survey, \cite{Boyd} for work
on reciprocal polynomials,  and \cite{BoydM,ERS,Moss1998,MRW2008} 
for more results on small Mahler measures and limit points.

One of the most studied classes of integer polynomials in relation to
Lehmer's problem are the \emph{Borwein
  polynomials}{\ts}---{\ts}polynomials of height $1$ (coefficients in
$\{-1,0,1\}$) with non-zero constant term; see \cite{DJS}. Special
importance is placed on this class, since for any integer polynomial
$p$ with $\fm (p) < \log (2)$ there is an integer polynomial $q$ such
that $p\, q$ has {height~$1$}; see Pathiaux \cite{P1972}. Boyd
\cite{Boyd-1} notes that, in his experience, such a $q$ can be taken
to be cyclotomic and of fairly small degree relative to the degree of
$p$; see also Mossinghoff \cite{Mossthesis}. If Boyd's observation
were proved true in general, then to solve Lehmer's problem it would
be enough to consider only Borwein polynomials; unfortunately, this is
still unknown.

Before we continue, let us mention that there is a well-known
connection between Mahler measures and algebraic dynamics. Here,
logarithmic Mahler measures show up as entropies of $\ZZ^d$-shifts of
algebraic origin \cite{EvWa,LSW,KS-Book}. The first appearance of a
Mahler measure in a similar context actually dates back to a paper by
Wannier \cite{Wannier} on the groundstate entropy of the
antiferromagnetic Ising model on the triangular lattice; see Remark
\ref{rem:Wannier} below for details. This general connection between
Mahler measures and entropy has initiated many investigations and
enhanced our knowledge about Mahler measures significantly; see
\cite{LSW,KS-Book} and the references therein for a detailed account.

In this paper, under some quite natural assumptions, we relate the
logarithmic Mahler measure of Borwein polynomials to a Lyapunov
exponent from the spectral theory of substitutions; see
\cite{Deninger-L} for an earlier appearance of a connection between
Mahler measures and Lyapunov exponents.  A \emph{binary
  constant-length substitution} $\varrho$ is defined on
$\vS_2:= \{ 0,1 \}$ by
\begin{equation*}
   \varrho: \, \begin{cases} 0 \mapsto w^{}_{0} \\ 
     1\mapsto w^{}_{1}\, , \end{cases}
\end{equation*}
where $w^{}_{0} \text{ and } w^{}_{1}$ are finite words over $\vS_{2}$
of equal length
$\lvert w^{}_{0}\rvert = \lvert w^{}_{1}\rvert = L\geqslant 2$.  Such
substitutions are important objects of research in many areas of
mathematics, ranging from dynamics and combinatorics (as
substitutions) to number theory (this is the class of binary automatic
sequences) and theoretical computer science (under the name of uniform
morphisms).

Recall that the \emph{substitution matrix} of $\varrho$ is the matrix
$M_{\varrho} = (m^{}_{ij})^{}_{0\leqslant i,j \leqslant 1}$ where 
$m^{}_{ij} \geqslant 0$ is the number of letters $i$
in the word $w^{}_j$. This matrix is also known as the
\emph{Abelianisation} of $\varrho$; compare \cite[Sec.~4.1]{TAO}. We
say that $\varrho$ is \emph{primitive} if the non-negative matrix
$M_\varrho$ is primitive, and \emph{aperiodic} if the hull (or shift)
defined by $\varrho$ does not contain any element with a non-trivial
period. When $\varrho$ is primitive, this is the case if and only if
any of the two-sided fixed points of $\varrho$ (or of $\varrho^n$ with
a suitable $n\in\NN$) with legal core (or seed) is non-periodic.  If
one of these fixed points is non-periodic, they all are, due to
primitivity; see \cite[Sec.~4.2]{TAO} for notions and further details.

Our main result is the following theorem; the relevant concepts
concerning Lyapunov exponents are recalled in Section \ref{sec:Lya}.

\begin{theorem}\label{main} 
  For any primitive, binary constant-length substitution\/ $\varrho$,
  the extremal Lyapunov exponents are explicitly given by
\[
  \chi^{B}_{\min}  \, = \, 0 
  \quad \text{and} \quad
  \chi^{B}_{\max}  \, = \, \fm  (p_\varrho) \ts ,
\]
where\/ $p_\varrho$ is a Borwein polynomial, easily determined by\/
$\varrho$. In particular, if $p_{\varrho}$ is non-reciprocal, one
has\/ $\chi^{B}_{\max} \geqslant \log (\lambda_{\mathrm{p}})$, where\/
$\lambda_{\mathrm{p}}$ is the plastic number.
\end{theorem}

Theorem~\ref{main} is a statement relating the logarithmic Mahler
measure of Borwein polynomials to Lyapunov exponents of binary
constant-length substitutions. Depending on which object one is
interested in, it can be used in a couple of different ways. As it
reads above, if one has a binary substitution, one can easily compute
the extremal Lyapunov exponents using the associated Borwein
polynomial. Alternatively, if one has a Borwein polynomial, one can
determine an associated binary constant-length substitution. This
relationship can be exploited to give some general results about
extremal Lyapunov exponents for certain binary substitutions. For
example, one now has a rather general result considering
\emph{bijective} substitutions, which are the substitutions where the
letters in the words $w^{}_0$ and $w^{}_1$ are different at each
position. Now, \cite[Cor.~1.2]{BHM} in conjunction with
Lemma~\ref{lem:subst-per} below implies the following consequence of
Theorem~\ref{main}; see Example~\ref{ex:Little} for more details.

\begin{coro} 
  Suppose that the primitive, binary constant-length substitution\/
  $\varrho$ is bijective, and that\/ $w^{}_0$ is neither a palindrome
  nor an anti-palindrome. Then, $\chi^{B}_{\max}\geqslant\log (\tau)$,
  where\/ $\tau$ is the golden ratio.
\end{coro}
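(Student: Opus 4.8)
The plan is to combine Theorem~\ref{main} with Smyth's gap theorem, using the bijectivity hypothesis to identify the associated Borwein polynomial $p_\varrho$ and the non-palindrome hypothesis to ensure that $p_\varrho$ is non-reciprocal. By Theorem~\ref{main}, we have $\chi^{B}_{\max} = \fm(p_\varrho)$ for a Borwein polynomial $p_\varrho$ determined explicitly by $\varrho$. Smyth's result \cite{Smy71} states that a non-reciprocal integer polynomial $q$ satisfies either $\fm(q) = 0$ or $\fm(q) \geqslant \log(\lambda_\mathrm{p})$, where $\lambda_\mathrm{p}$ is the plastic number. So the corollary reduces to two tasks: first, showing that the bijectivity hypothesis forces $p_\varrho$ to take a specific shape whose reciprocity is governed by whether $w^{}_0$ is a palindrome; and second, ruling out the degenerate case $\fm(p_\varrho) = 0$, which by Kronecker's theorem would mean $p_\varrho$ is (up to a monomial) a product of cyclotomics. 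This last exclusion is presumably where Lemma~\ref{lem:subst-per} enters: a vanishing Mahler measure should correspond to periodicity or some other degeneracy that is excluded by primitivity together with the non-palindrome assumption.

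First I would unwind the explicit recipe from Theorem~\ref{main} in the bijective case. For a bijective binary substitution, the two words $w^{}_0$ and $w^{}_1$ differ in every position, so $w^{}_1$ is obtained from $w^{}_0$ by flipping every letter; encoding $w^{}_0 = c^{}_0 c^{}_1 \cdots c^{}_{L-1}$ with $c^{}_j \in \{0,1\}$, the structure of the cocycle collapses to a scalar (one-dimensional) Fourier-type expression, and the associated Borwein polynomial should be essentially $p_\varrho(z) = \sum_{j=0}^{L-1} \varepsilon^{}_j\ts z^{j}$ with $\varepsilon^{}_j = (-1)^{c_j} \in \{-1,+1\}$ (a height-one polynomial with unit, hence non-zero, constant term). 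This is the step where I would pin down, from the construction underlying Theorem~\ref{main}, exactly how the coefficients of $p_\varrho$ read off the letters of $w^{}_0$.

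Next I would translate the palindrome condition on $w^{}_0$ into the reciprocity of $p_\varrho$. With all coefficients in $\{-1,+1\}$, the polynomial $p_\varrho$ is reciprocal precisely when its coefficient sequence is palindromic, i.e. $\varepsilon^{}_j = \varepsilon^{}_{L-1-j}$ for all $j$, which in turn happens exactly when the word $w^{}_0$ is a palindrome. Hence the hypothesis that $w^{}_0$ is not a palindrome yields that $p_\varrho$ is \emph{non-reciprocal}, placing us squarely in the regime of Smyth's theorem. Applying it gives $\chi^{B}_{\max} = \fm(p_\varrho) \geqslant \log(\lambda_\mathrm{p})$, provided we have already excluded $\fm(p_\varrho) = 0$.

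The main obstacle I expect is the exclusion of the degenerate case $\fm(p_\varrho) = 0$, together with verifying that the sign-convention and indexing in the recipe of Theorem~\ref{main} really do make ``$w^{}_0$ palindromic'' coincide with ``$p_\varrho$ reciprocal'' (and not, say, reciprocal up to sign or up to a reflection that mildly shifts the correspondence). For the degeneracy, the natural route is Lemma~\ref{lem:subst-per}: one shows that $\fm(p_\varrho)=0$ forces a periodicity or cyclotomic degeneracy of $\varrho$ that contradicts either primitivity or the non-palindrome hypothesis. Concretely, a non-reciprocal polynomial with $\fm = 0$ would, by Kronecker's theorem, be a monomial times a product of cyclotomic polynomials, yet a genuinely non-reciprocal such product is severely constrained; I would invoke Lemma~\ref{lem:subst-per} to show this cannot occur for our $p_\varrho$ under the stated assumptions, thereby landing in Smyth's nontrivial alternative and completing the proof.
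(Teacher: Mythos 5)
Your overall route is the paper's intended one: combine Theorem~\ref{main} with Smyth's theorem \cite{Smy71}, and your identification of the polynomial is correct. For a bijective $\varrho$ every column is one of the two mixed types, so $P_a \cup P_b = \{0,1,\dots,L-1\}$ and $p_\varrho = Q-R$ is a Littlewood polynomial whose $m$-th coefficient is $+1$ or $-1$ according to whether $(w_0)_m$ is $0$ or $1$; hence $Q-R$ is reciprocal exactly when $w_0$ is a palindrome. Up to that point you agree with the paper, which derives the corollary from Theorem~\ref{main}, Smyth's result and Lemma~\ref{lem:subst-per} without further elaboration.

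The step you defer to the end --- excluding $\fm(p_\varrho)=0$ --- is where the argument genuinely breaks, and it cannot be closed along the lines you propose. Writing $q^*(z) := z^{\deg(q)}q(1/z)$, Kronecker's theorem gives that a Borwein polynomial with vanishing logarithmic Mahler measure is $\pm$ a product of cyclotomic polynomials; since the $n$-th cyclotomic polynomial is reciprocal for $n\geqslant 2$ while $z-1$ satisfies $(z-1)^* = -(z-1)$, such a product is either reciprocal or \emph{anti-reciprocal} ($q=-q^*$). The non-palindrome hypothesis only excludes the reciprocal alternative. For a Littlewood polynomial, anti-reciprocity means the sign sequence is anti-palindromic, i.e.\ the reversal of $w_0$ equals $w_1$, which is entirely compatible with $w_0$ not being a palindrome; and Lemma~\ref{lem:subst-per} cannot rule this out because it only characterises periodic hulls, whereas the offending substitutions are typically aperiodic. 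Thue--Morse (Example~\ref{thue}) is the concrete witness: it is primitive, bijective and aperiodic, $w_0 = 01$ is not a palindrome, yet $Q-R = 1-z$ gives $\chi^{B}_{\max}=0$, as the paper itself records after Theorem~\ref{thm:per0}. The repair is to use Smyth's theorem in its sharp form, $\fm(q)\geqslant \log(\lambda_{\mathrm{p}})$ whenever $q(0)\neq 0$ and $q\neq \pm q^*$, which requires the additional hypothesis that $w_1$ is not the reversal of $w_0$ (equivalently, that $w_0$ fails to be a palindrome even up to the letter exchange $0\leftrightarrow 1$). Under that strengthened hypothesis the branch $\fm(p_\varrho)=0$ never occurs, and neither Kronecker's theorem nor Lemma~\ref{lem:subst-per} is needed at all.
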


As it turns out, primitive, binary constant-length substitutions which
are periodic do not satisfy the assumptions of this corollary; in
fact, for such periodic substitutions one has that
$\chi^{B}_{\max}=0$. A characterisation and further details regarding
these periodic substitutions are given later; see Lemma
\ref{lem:subst-per} and Theorem \ref{thm:per0}.

In view of the above results, in the case of Borwein polynomials,
Lehmer's problem can be restated in terms of the Lyapunov exponents
for binary constant-length substitutions.

\begin{lproblem}[dynamical analogue] 
  Does there exist a constant $c>0$ such that, for any primitive,
  binary constant-length substitution $\varrho$ with
  $\chi^{B}_{\max}\neq 0$, we have
  $\chi^{B}_{\max}\geqslant c \,$?
\end{lproblem}

\begin{remark} 
  Strong versions of both Lehmer's problem and our dynamical analogue
  would ask whether the constant $c$ can be taken to be 
  $\fm(\ell^{}_{\mathrm{L}})$, the logarithmic Mahler measure of 
  Lehmer's polynomial from Eq.~\eqref{eq:Lehmer}.
  \exend
\end{remark}

Viewing Lehmer's problem in a dynamical setting, as related to
constant-length substitutions, has some heuristic benefits. In this
area, especially at the interface with number theory, gap results are
common and expected. For example, if $f(n)$ denotes the
$n^{\text{th}}$ letter of a one-sided fixed point of a
constant-length substitution, then, for any positive integer
$b\geqslant 2$, the number $\sum_n f(n) \ts b^{-n}$ is either rational
or transcendental \cite{ABannals,AFplms,BBCplms}. Also, this number
cannot be a Liouville number, that is, it has finite irrationality
exponent \cite{ACcompo,BBCplms}. The partial sums
$S(N):=\sum_{n\leqslant N}f(n)$ satisfy even stronger gap
properties. If $S(N)$ is unbounded, there is a constant $c>0$ such
that $|S(N)|\geqslant c\, \log(N)$ for infinitely many integers $N$;
see \cite{BCH1,BCH2}. Viewing Lehmer's problem for Borwein polynomials
in this context may, at the least, take away some of the surprise of
its conclusion, and provide an additional reason to believe in the
conjecture for this class of polynomials. \smallskip

The remainder of this paper is organised as follows. In
Section~\ref{sec:subs}, we give details regarding binary substitutions
and their associated Fourier matrices, while we give the relevant
definitions on Lyapunov exponents in Section~\ref{sec:Lya}.
Section~\ref{sec:res} contains the proof of Theorem~\ref{main} as a
consequence of a more detailed version. In Section~\ref{sec:egs}, we
provide several examples, including those related to Littlewood,
Newman, and Borwein polynomials. Finally, in Section
\ref{sec:exandout}, we explore extensions to higher dimensions and
their relationship to logarithmic Mahler measures of multi-variable
polynomials.

%%%%%%%%%%%%%%%%%%%%%%%%%%%%%%%%%%%%
\section{Substitutions and their Fourier matrices}\label{sec:subs}
%%%%%%%%%%%%%%%%%%%%%%%%%%%%%%%%%%%%

As stated in the Introduction, we are concerned with binary constant-length
substitutions $\varrho$ defined on $\vS_2:= \{ 0,1 \}$ by
\begin{equation}\label{eq:def-rho}
   \varrho: \, \begin{cases} 0 \mapsto w^{}_{0} \\ 
     1\mapsto w^{}_{1}\, , \end{cases}
\end{equation}
where $w^{}_{0} \text{ and } w^{}_{1}$ are finite words over $\vS_{2}$
of equal length\footnote{Those comfortable with the dynamical setting
  will note that, by working with two prototiles of unit length, the
  tiling and symbolic pictures of these systems are equivalent
  (topologically conjugate by a sliding block map).}
$\lvert w^{}_{0}\rvert = \lvert w^{}_{1}\rvert = L\geqslant 2$.

We denote the $m^{\text{th}}$ column of $\varrho$ by
\[
   \cC_m \ts := \, \left[\begin{matrix} 
   \, (w_0 )^{}_m\, \\ \, (w_1 )^{}_m\,
   \end{matrix}\right] ,
\]
where $(w_i)_m$ is the $m^{\text{th}}$ letter of the word $w_i$. We
follow the convention of indexing the columns starting with $0$;
compare \cite[Ch.~4]{TAO}.  A binary substitution is said to have a
\emph{coincidence} at position $m$, if the column at that specific
position is either $\begin{bsmallmatrix} 0\\ 0
\end{bsmallmatrix}$
or $\begin{bsmallmatrix} 1\\ 1 \end{bsmallmatrix}$. A binary
substitution is called \emph{bijective} if there are no
coincidences.

For $0\leqslant i,j \leqslant 1$, let $T_{ij}$ be the set of all
positions $m$ where the letter $i$ appears in $w_j$, and let
$T:=(T_{ij})_{0\leqslant i,j\leqslant 1}$ be the resulting
$2\!\times\! 2$-matrix. Note that the substitution matrix $M_\varrho$,
as defined above, satisfies
\[
   M_{\varrho}  \,  = \, 
  \bigl( \mathrm{card} (T_{ij}) \bigr)_{0\leqslant
  i,j\leqslant 1} \ts .
\]
Using $T$, we build a matrix of pure point measures
$\delta^{}_T := (\delta^{}_{T_{ij}})_{0\leqslant i,j \leqslant 1},$ where
$\delta^{}_{S} := \sum_{x\in S} \delta^{}_x$ with
$\delta^{}_{\varnothing}=0$. This gives rise to an analytic
matrix-valued function via
\[
   B(k) \, := \, \overline{\widehat{\delta^{}_T}(k)} \ts ,
\]
which we call the \emph{Fourier matrix} of $\varrho$; see
\cite{BFGR,BG15}.  Note that $B (0) = M_\varrho $.  The Fourier matrix
provides more information than $M_{\varrho}$; it encodes the column
positions of each letter in the corresponding words that contain them,
whereas the entries of $M_\varrho$ only count the letters $0$ and $1$
in $w^{}_{0}$ and $w^{}_{1}$, respectively.

The following two examples are paradigmatic for the two principal
situations among aperiodic, binary constant-length substitutions.

\begin{example}\label{thue}
Consider the Thue--Morse substitution, as given by
\[  
  \varrho^{\vphantom{I_p}}_{\mathrm{TM}}: \begin{cases} 
   0\mapsto 01 \ts  \\ 1\mapsto 10 \ts .\end{cases}
\]  
Here, one has
  $T^{}_{\mathrm{TM}}= \left(\begin{smallmatrix} \{ 0 \} & \{ 1 \} \\
      \{ 1 \} & \{ 0 \} \end{smallmatrix} \right)$, which gives
\begin{flalign*}
   &&\delta^{}_{T^{}_{\mathrm{TM}}} \ts  = \,
     \begin{pmatrix}
     \delta^{}_0 & \delta^{}_1\\
     \delta^{}_1 & \delta^{}_0
     \end{pmatrix} 
   \quad \text{and} \quad
    B^{}_{\mathrm{TM}}(k) \, = \, \begin{pmatrix}
    1 & \ee^{2\pi \ii k}\\
    \ee^{2\pi \ii k} & 1
    \end{pmatrix}. &&\mbox{\exend}
\end{flalign*}
\end{example}

\begin{example}\label{pd}
On the other hand, for the period doubling substitution,
\[
   \varrho^{}_{\text{pd}}:\begin{cases}
   0\mapsto 01 \ts  \\ 1\mapsto 00 \ts ,\end{cases}
\] 
the corresponding
matrices are $T^{}_{\mathrm{pd}} =
\left( \begin{smallmatrix} \{ 0 \} & \{ 0, 1 \} \\
    \{ 1 \} & \varnothing \end{smallmatrix} \right)$ together with
\begin{flalign*}
   &&\delta^{}_{T_{\mathrm{pd}}} \, = \, 
   \begin{pmatrix}
   \delta^{}_0 & \delta^{}_0+\delta^{}_1\\
   \delta^{}_1 &  0
   \end{pmatrix}
  \quad \text{and} \quad
   B^{}_{\mathrm{pd}}(k) \, = \, \begin{pmatrix}
    1 & 1+\ee^{2\pi \ii k}\\
    \ee^{2\pi \ii k} & 0
    \end{pmatrix}.&&\mbox{\exend}
\end{flalign*}
\end{example}

%%%%%%%%%%%%%%%%%%%%%%%%%%%%%%%%%%%%%%
\section{Lyapunov exponents}\label{sec:Lya}
%%%%%%%%%%%%%%%%%%%%%%%%%%%%%%%%%%%%%%

Using the ergodic transformation $k\mapsto Lk\bmod 1 $ defined on the
$1$-torus $\TT$, which is represented by $[0,1)$ with addition modulo
$1$ and equipped with Lebesgue measure, one can use the Fourier matrix
$B(k)$ to build the matrix cocycle
\[
   B^{(n)}(k) \, := \,
   B(k) B(Lk)\cdots B (L^{n-1}k ) \ts ,
\]
where the (dynamically unusual) extension to the right originates from
the underlying spectral problem of binary substitutions; see
Remark~\ref{rem:spec} below and \cite{BG15,BGM,BaGriMa}. Recall that
the integer $L\geqslant 2$ is the common length of the words $w^{}_0$
and $w^{}_1$ from the definition of the binary substitution
$\varrho$. We note further that the inverse cocycle
$( B^{(n)} (k) )^{-1}$ exists for almost every $k\in \RR$, because
$\det \bigl( B(k)\bigr) =0$ for at most a countable subset of $\RR$.
Due to the ergodicity of the transformation $k \mapsto Lk$ on $\TT$
relative to Lebesgue measure, Oseledec's multiplicative ergodic
theorem ensures the existence of the Lyapunov exponents and the
corresponding subspaces in which they represent the asymptotic
exponential growth rate of the vector norms; see \cite{BP}. More
precisely, if $v\in\CC^2$ is any (fixed) row vector, the values
\begin{equation}\label{eq:def-L}
    \chi^{B} (v,k) \, := \lim_{n\to\infty}
   \myfrac{1}{n} \log\| v B^{(n)}(k) \|
\end{equation}
exist for Lebesgue-almost every $k\in \RR$ and are constant on a set
of full measure. In fact, as a function of $v$, they take only
finitely many values, at most two in this case. These values do not
depend on the choice of the norm in \eqref{eq:def-L}.  Moreover, in
the non-degenerate case, there exists a filtration
\[
   \{ 0 \} \, =: \,  \cV^{}_{0} \, \subsetneq\, 
  \cV^{}_{1} \,  \subsetneq\,
  \cV^{}_{2} \, := \, \mathbb{C}^{2}
\]
such that $\chi^{B}_{1}$ is the corresponding exponent for all
$0 \ne v\in\cV_{1}$ and $\chi^{B}_{2}$ for all
$v\in\cV_{2}\setminus\cV_{1}$.  A vector $v$ from the Oseledec
subspace $\cV_{i+1} \setminus \cV_{i}$ satisfies the property that,
for almost every $k\in \RR$, the norm $\|v B^{(n)}(k) \|$ has
exponential growth factor $\ee^{n\chi^{B}_{i+1}}$ as $n\to \infty$.
In general, these subspaces depend on $k$, and the filtration
simplifies in the obvious way when
$\chi^{B}_{1} (k) = \chi^{B}_{2} (k)$.  We refer the reader to the
monographs \cite{BP} for a general overview and \cite{Viana} for a
more elaborate discussion of linear cocycles.

It is well known that there exist at most two distinct exponents for
$2$-dimensional cocycles, denoted by $\chi^{B}_{\max} (k)$ and
$\chi^{B}_{\min} (k)$.  For invertible cocycles, these exponents
afford $v$-independent representations as
\begin{align*}
  \chi^{B}_{\max} (k ) \, & := \,  
   \lim_{n\to\infty} \myfrac{1}{n} \log \ts 
   \bigl\| B^{(n)} (k) \bigr\|   \quad \text{and} \\
   \chi^{B}_{\min} (k ) \, & := \, 
   - \lim_{n\to\infty} \myfrac{1}{n}
   \log \ts \bigl\| \bigl(B^{(n)}  (k )\bigr)^{-1}\bigr\| .
\end{align*}
Moreover, the exponents are constant almost everywhere, hence
effectively independent of $k$ as well. This means that we are dealing
with two \emph{numbers}, $\chi^{B}_{\max}$ and $\chi^{B}_{\min}$. It
turns out that one has an unexpected connection with logarithmic
Mahler measures, as we discuss below.

\begin{remark}\label{rem:spec}
  These exponents come up in the spectral study of the substitutions
  associated to these Fourier matrices, and can be derived from a
  renormalisation scheme involving pair correlation functions; see
  \cite{BG15,BGM,BGr}.  In particular, by measure-theoretic arguments,
  one can conclude that, if
  $\lvert \chi^{B}_{\max}\rvert < \log \sqrt{L}$, the diffraction
  measure, for an arbitrary choice of weights, never has an absolutely
  continuous component.  We refer to the literature for further
  details, namely to \cite{Neil} for the binary constant length case,
  and to \cite{BFGR,BaGriMa} for an appropriate extension to a family
  of non-Pisot substitutions, via the corresponding inflation tiling.
  \exend
\end{remark}

%%%%%%%%%%%%%%%%%%%%%%%%%%%%%%%%%%%%%%
\section{Proof of the main result}\label{sec:res}
%%%%%%%%%%%%%%%%%%%%%%%%%%%%%%%%%%%%%%

As stated in Theorem~\ref{main} in the Introduction, the maximal
Lyapunov exponent can be written as a logarithmic Mahler measure. We
prove this as Theorem~\ref{thm:min-max} below, which is a more
detailed version of Theorem~\ref{main}; compare \cite{Neil}.

\begin{lemma}\label{lem:mat}
  Let\/ $\varrho$ be a substitution as specified in
  Eq.~\eqref{eq:def-rho}.  Consider the sets
\[
    P^{}_a := \big\{ m \mid \cC_m = 
  \begin{bsmallmatrix} 0\\ 1 \end{bsmallmatrix} \big\}
  \quad \text{and} \quad
  P^{}_b := \big\{ m \mid \cC_m = \begin{bsmallmatrix}
    1\\0 \end{bsmallmatrix} \big\} ,
\]
which collect bijective positions of the same type.  Further, let\/
$z=\ee^{2 \pi \ii k}$ and set
\[
   Q (z) \, := \, \overline{\widehat{\delta_{P_a}} (k)}
   \quad \text{and} \quad
   R (z) \, := \, \overline{\widehat{\delta_{P_b}} (k)}.
\]
Then, $\det \bigl( B(k) \bigr) = p^{}_L(z)\cdot \bigl(
Q-R \bigr)(z)$,
  where\/  $p^{}_{L} (z) = 1+z+\cdots + z^{L-1}$.
\end{lemma}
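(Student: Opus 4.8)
The plan is to reduce the identity to a short but carefully bookkept computation of a $2\times 2$ determinant. First I would record the convention underlying the Fourier matrix: since $\overline{\widehat{\delta^{}_m}(k)} = \ee^{2\pi\ii k m} = z^m$, each entry of $B(k)$ is simply the generating polynomial of the corresponding index set, namely $B(k)^{}_{ij} = \sum_{m\in T^{}_{ij}} z^m$. One checks this against the Thue--Morse example of Section~\ref{sec:subs}, where it reproduces $\left(\begin{smallmatrix} 1 & z\\ z & 1\end{smallmatrix}\right)$.

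Next I would partition the column index set according to the four possible values of $\cC_m$. In addition to the two bijective classes $P^{}_a$ and $P^{}_b$, write $C^{}_0$ and $C^{}_1$ for the coincidence positions with columns $\begin{bsmallmatrix} 0\\ 0\end{bsmallmatrix}$ and $\begin{bsmallmatrix} 1\\ 1\end{bsmallmatrix}$, respectively. Since every position $m\in\{0,1,\ldots,L-1\}$ realises exactly one of these four column types, one has the disjoint decomposition $\{0,\ldots,L-1\} = C^{}_0 \sqcup C^{}_1 \sqcup P^{}_a \sqcup P^{}_b$. Setting $f^{}_i(z) := \sum_{m\in C^{}_i} z^m$ for $i\in\{0,1\}$, this decomposition immediately yields the normalisation $f^{}_0 + f^{}_1 + Q + R = p^{}_{L}$, which will supply the claimed factor at the very end.

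The core step is to express the four entries of $B$ through this partition. A letter sits in $w^{}_0$ (the top row) precisely at the positions whose column carries that letter on top, and similarly for $w^{}_1$; sorting the four sets $T^{}_{ij}$ accordingly gives
\[
  B^{}_{00} = f^{}_0 + Q,\quad B^{}_{11} = f^{}_1 + Q,\quad
  B^{}_{01} = f^{}_0 + R,\quad B^{}_{10} = f^{}_1 + R .
\]
Expanding $\det\bigl(B(k)\bigr) = B^{}_{00} B^{}_{11} - B^{}_{01} B^{}_{10}$, the purely quadratic terms collapse to $Q^2 - R^2 = (Q-R)(Q+R)$, while the mixed terms give $(f^{}_0 + f^{}_1)(Q-R)$, so that the whole determinant factors as $(Q-R)\bigl(f^{}_0 + f^{}_1 + Q + R\bigr)$. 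Invoking the normalisation of the previous paragraph turns the second factor into $p^{}_{L}$, which is the assertion.

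I do not anticipate a substantive obstacle, as the statement is an exact algebraic identity whose proof is a few lines of polynomial algebra. The only point demanding care is the translation between the word-based description of the index sets $T^{}_{ij}$ and the column-type partition $C^{}_0, C^{}_1, P^{}_a, P^{}_b$: it is precisely the correct identification of the four entries $B^{}_{ij}$ that makes the common factor $(Q-R)$ emerge, so a single misassignment there would destroy the clean factorisation.
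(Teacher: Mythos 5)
Your proposal is correct and follows essentially the same route as the paper: the paper's proof introduces the identical coincidence sets and their transforms (there called $S^{}_0, S^{}_1$), writes $B(k)$ in exactly the entry-wise form you derive, records $S^{}_0+S^{}_1+Q+R=p^{}_L$, and leaves the determinant expansion as a direct computation, which you simply carry out explicitly (and correctly).
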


\begin{proof} 
  Similar to the definitions of $Q$ and $R$ above, define the sets
  $P^{}_0 := \left\{ m \mid \cC_m =
  \begin{bsmallmatrix} 0\\ 0 \end{bsmallmatrix} \right\}$ and
  $P^{}_1 := \left\{ m \mid \cC_m = \begin{bsmallmatrix}
    1\\1 \end{bsmallmatrix} \right\},$
  and let
\[
   S_0 (z) \, := \, \overline{\widehat{\delta_{P_0}} (k)}
   \quad \text{and} \quad
   S_1 (z) \, := \, \overline{\widehat{\delta_{P_1}} (k)}.
\]
In general, the Fourier matrix of $\varrho$ satisfies
\[
   B(k) \, = \, \begin{pmatrix}
   \bigl( S^{}_{0}+Q \bigr) (z) & 
   \bigl( S^{}_{0}+R \bigr) (z) \\ 
   \bigl( S^{}_{1}+R \bigr) (z) & 
   \bigl( S^{}_{1}+Q \bigr) (z)
    \end{pmatrix} 
    \quad \text{with} \, z = \ee^{2 \pi \ii k} .
\]
Since there are only four distinct column types, we see that
\[
	S^{}_0+S^{}_1+Q+R \, = \, 
   p^{\vphantom{I_p}}_{L} \ts .
\] 
One can now verify the lemma by direct computation.
\end{proof}

Recall that, as a consequence of Oseledec's multiplicative ergodic
theorem, our Lyapunov exponents exist, and are constant, for almost
every $k\in\RR$. We call them $ \chi^{B}_{\min}$ and
$ \chi^{B}_{\max}$.

\begin{theorem}\label{thm:min-max}
  For any primitive, binary constant-length substitution\/ $\varrho$,
  the extremal Lyapunov exponents are explicitly given by
\[
  \chi^{B}_{\min}  \, = \, 0 
  \quad \text{and} \quad
  \chi^{B}_{\max}  \, = \, \fm  (Q-R) \ts ,
\]
   with\/ $Q$ and\/ $R$ as in Lemma~$\ref{lem:mat}$.
\end{theorem}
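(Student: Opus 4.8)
The plan is to combine two ingredients: a \emph{sum rule} relating the two exponents to the Mahler measure of the determinant, and the existence of one explicit invariant direction whose exponent is $0$.

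First I would use that, for an invertible two-dimensional cocycle, the sum of the Lyapunov exponents equals the exponent of the determinant, since the largest and smallest singular values of $B^{(n)}(k)$ grow like $\ee^{n\chi^{B}_{\max}}$ and $\ee^{n\chi^{B}_{\min}}$, while their product is $\lvert\det B^{(n)}(k)\rvert$. By multiplicativity, $\det B^{(n)}(k) = \prod_{j=0}^{n-1}\det B(L^{j}k)$, so ergodicity of $k\mapsto Lk$ on $\TT$ and Birkhoff's theorem give
\[
   \chi^{B}_{\min} + \chi^{B}_{\max} \, = \,
   \int_{0}^{1}\log\bigl\lvert\det B(k)\bigr\rvert\dd k \, = \, \fm(\det B) \ts .
\]
By Lemma~\ref{lem:mat}, $\det B(k)=p^{}_{L}(z)\,(Q-R)(z)$, and since $p^{}_{L}(z)=1+z+\ldots+z^{L-1}$ is a product of cyclotomic polynomials, one has $\fm(p^{}_{L})=0$; hence $\chi^{B}_{\min}+\chi^{B}_{\max}=\fm(Q-R)$.

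Next I would exhibit a vector realising the exponent $0$. From the identity $S^{}_{0}+S^{}_{1}+Q+R=p^{}_{L}$ established in the proof of Lemma~\ref{lem:mat}, a direct computation shows that the row vector $(1,1)$ is a \emph{common} left eigenvector of $B(k)$ for every $k$, with $(1,1)\,B(k)=p^{}_{L}(z)\,(1,1)$. Iterating the cocycle yields $\lVert (1,1)\,B^{(n)}(k)\rVert = \prod_{j=0}^{n-1}\bigl\lvert p^{}_{L}(\ee^{2\pi\ii L^{j}k})\bigr\rvert\cdot\lVert(1,1)\rVert$, so Birkhoff's theorem again gives that the Lyapunov exponent of $(1,1)$ equals $\fm(p^{}_{L})=0$ for almost every $k$.

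Finally I would decide which of the two exponents equals $0$. By Oseledec's theorem the exponent of any fixed vector is one of $\chi^{B}_{\min},\chi^{B}_{\max}$, so $0\in\{\chi^{B}_{\min},\chi^{B}_{\max}\}$. Now $Q-R=\sum_{m\in P_a}z^{m}-\sum_{m\in P_b}z^{m}$ has coefficients in $\{-1,0,1\}$ and unimodular leading coefficient, whence $\fm(Q-R)\geqslant 0$. Since the two exponents are ordered and sum to $\fm(Q-R)$, the value $0$ cannot be the larger one unless both vanish: if $0=\chi^{B}_{\max}$ then $\chi^{B}_{\min}\leqslant 0$ forces the sum to be $\leqslant 0$, hence $=0$, so both exponents are $0$. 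In either case $\chi^{B}_{\min}=0$, and therefore $\chi^{B}_{\max}=\fm(Q-R)$, as claimed.

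The main obstacle is this last step, namely pinning down that $0$ is the \emph{minimal} exponent rather than the maximal one, and handling the borderline case $\fm(Q-R)=0$ (the periodic substitutions), where the Oseledec filtration degenerates and the two exponents coincide; the argument above is designed to cover both situations uniformly. A secondary technical point is justifying the sum rule itself, which relies on the almost-everywhere constancy of the exponents and on integrability of $\log\lvert\det B\rvert$, guaranteed as soon as $Q-R\not\equiv 0$, i.e.\ whenever $\varrho$ possesses at least one bijective column.
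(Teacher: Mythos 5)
Your proposal is correct and follows essentially the same route as the paper's proof: the Oseledec/Birkhoff sum rule applied to $\det B^{(n)}(k)$ together with the factorisation of Lemma~\ref{lem:mat} and $\fm(p^{}_{L})=0$, the common left eigenvector $(1,1)$ with eigenvalue $p^{}_{L}(z)$ yielding a zero exponent, and the non-negativity of the logarithmic Mahler measure of an integer polynomial to identify that exponent as the minimal one. Your explicit treatment of the degenerate case $\fm(Q-R)=0$ only spells out what the paper leaves implicit.
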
  

\begin{proof}
  Aside from the existence of the extremal Lyapunov exponents as
  limits, Oseledec's multiplicative ergodic theorem \cite{BP,Viana}
  also guarantees forward Lyapunov regularity almost everywhere. 
  That is, for almost every $k\in\RR$, the sum of the exponents is 
  given by
\begin{equation}\label{eq:sum}
  \chi^{B}_{\min} (k) + \chi^{B}_{\max} (k) \, =
  \lim_{n\to\infty} \myfrac{1}{n} \log \ts
  \bigl| \det \bigl( B^{(n)} (k) \bigr) \nts \bigr| ,
\end{equation}
where one can argue that, for the matrices above, the limit on the
right-hand side converges for almost every $k\in\RR$ to
\[
   \fm ( Q-R ) \, = \int^{1}_{0}\log \ts
   \bigl| (Q-R) \bigl(\ee^{2 \pi \ii t} \bigr)
    \bigr| \dd t \ts .
\]
This can be seen by an application of Birkhoff's ergodic theorem,
because $t \mapsto L t $ on $\TT$ is ergodic for Lebesgue measure, and
$t \mapsto (p^{}_{L} \! \cdot
(Q-R ) ) \bigl(\ee^{2 \pi \ii t}\bigr)$ defines a function in
$L^{1} (\TT)$.  The claim then follows from the multiplicative
property of the determinant in conjunction with Lemma~\ref{lem:mat}
and the fact that $\fm (p^{}_{L}) = 0$. This value follows via Jensen's
formula because all zeros of $p^{}_{L}$ are roots of unity. 

Next, we note that the row vector $(1,1)$ is a left eigenvector of
$B(k)$, for all $k\in\RR$, with eigenvalue
$p^{}_{L} \bigl(\ee^{2 \pi \ii k}\bigr)$. Hence, using this specific
direction, we get one of the exponents to be
$\chi^{B}_{1} = \fm (p^{}_{L}) = 0$.   From the
sum in Eq.~\eqref{eq:sum}, and from the fact that the logarithmic
Mahler measure of an integer polynomial is always non-negative, we
then get that the exponent corresponding to this invariant subspace is
the minimal one, $\chi^{B}_{1} = \chi^{B}_{\min}$, the other being
$\chi^{B}_{\max} = \fm (Q-R)$.
\end{proof}

Note that the result of this theorem is not restricted to
bijective substitutions, even though only the bijective
positions matter for the exponents.

\begin{remark}\label{rem:more}
  In the proof of Theorem~\ref{thm:min-max}, instead of invoking
  Birkhoff's ergodic theorem, one can also work with the uniform
  distribution of $(L^n k)^{}_{n\in\NN}$ modulo $1$ for almost every
  $k\in\RR$ and Weyl's lemma. The difficulty to overcome here is that
  the function defined by
  $k \mapsto \log \ts \ts \bigl| \det \bigl( B (k) \bigr) \bigr|$
  generally has singularities.  Fortunately, they are isolated (hence
  at most countable), and one can extend Weyl's result for locally
  Riemann integrable $1$-periodic functions to this case via Sobol's
  theorem in conjunction with Diophantine approximation and
  discrepancy analysis; see \cite{BHL} and references therein for a
  more comprehensive discussion.
  
  It is also interesting to observe that one can obtain
  $\chi^{B}_{\max}$ via the ($k$-independent) right eigenvector
  $\tilde{v}=\left(\begin{smallmatrix} 1\\
      -1 \end{smallmatrix}\right)$
  of $B (k)$, with corresponding eigenvalue
  $( Q-R ) \bigl(\ee^{2 \pi \ii k}\bigr)$ as before. One then has, for
  almost every $k\in\RR$, that
\[
    \lim_{n\to\infty} \myfrac{1}{n} \log \bigl\|
    B^{(n)} (k) \ts \tilde{v} \bigr\| \, =  \lim_{n\to\infty}
    \myfrac{1}{n} \Bigl( \| \tilde{v} \| \, + 
    \sum_{\ell=0}^{n-1} \log \ts \bigl|
    ( Q-R ) \bigl( \ee^{2 \pi \ii L^\ell k} \bigr) \bigr| \Bigr)
    \, = \, \fm (Q-R) \ts ,
\]
where the last step once again relies on Birkhoff's
ergodic theorem or on the remarks of the preceding paragraph.
\exend
\end{remark}

In the general setting of Theorem~\ref{thm:min-max}, one gets a
stronger result assuming periodicity.  We require the following lemma,
where we use the common shorthand $\varrho = (w^{}_{0} , w^{}_{1})$
for the substitution from Eq.~\eqref{eq:def-rho}.

\begin{lemma}\label{lem:subst-per}
  Let\/ $\varrho$ be a primitive, binary constant-length substitution
  that defines a periodic hull. Then, one either has\/
  $\varrho = (w,w)$ with\/ $w$ containing at least one copy each of
  the letters\/ $a$ and\/ $b$, or\/ $\varrho$ is bijective, and of the
  form\/ $\varrho = \bigl( (ab)^m a, (ba)^m b \bigr)$ or\/
  $\varrho = \bigl( (ba)^m b, (ab)^m a\bigr)$ for some\/ $m\in\NN$.
\end{lemma}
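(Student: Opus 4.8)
The plan is to read off the structure of $\varrho$ from the word generating its periodic hull, guided by the column trichotomy behind Lemma~\ref{lem:mat}. By primitivity the hull is minimal, so a periodic hull is a single finite shift-orbit, and some power of $\varrho$ (with the same hull) has a genuine, necessarily non-constant, two-sided fixed point. For any constant-length substitution with a genuine fixed point $u=v^{\infty}$ whose period word $v$ has minimal length $p$, matching the block structure in $\varrho(u)=u$ gives $\varrho(v)=v^{L}$, that is,
\[
   (w^{}_{v_i})^{}_j \, = \, v^{}_{(iL+j)\bmod p}
   \qquad (0\leqslant i<p,\ 0\leqslant j<L),
\]
and this identity is the engine of the proof. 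I also record that the presence of a coincidence column, the presence of a bijective column, and injectivity on letters all pass to every power of a substitution, so these features may be tested on $\varrho$ itself.

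The crux is a separate claim: a binary constant-length substitution having at least one coincidence column and at least one bijective column has no non-constant periodic point. To prove it, pass to a power (still of mixed type) so that a given periodic point becomes a genuine fixed point, and apply the identity above. A bijective column $j^{}_0$ makes $v^{}_{(iL+j_0)\bmod p}$ equal to $(w^{}_0)^{}_{j_0}$ or its complement according as $v^{}_i$ is $a$ or $b$, so the value of $v$ at $(iL+j^{}_0)\bmod p$ determines $v^{}_i$. As the fibres of $i\mapsto (iL+j^{}_0)\bmod p$ are exactly the residue classes modulo $p/\gcd(L,p)$, this forces $v$ to be invariant under the shift by $p/\gcd(L,p)$, so minimality of $p$ yields $\gcd(L,p)=1$. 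A coincidence column $j^{}_1$ with common value $x$ then gives $v^{}_{(iL+j_1)\bmod p}=x$ for all $i$, and since $\gcd(L,p)=1$ the indices $(iL+j^{}_1)\bmod p$ exhaust $\ZZ/p\ZZ$, whence $v\equiv x$ is constant. This step, together with the bookkeeping that secures a genuine fixed point via a power while preserving the column types, is what I expect to be the main obstacle.

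Applying the claim to a power of $\varrho$ carrying a non-constant genuine fixed point excludes the mixed case: either $\varrho$ has no bijective column, i.e.\ $w^{}_0=w^{}_1=:w$, or $\varrho$ has no coincidence column, i.e.\ $\varrho$ is bijective. In the first case primitivity of $M^{}_\varrho$ forces $w$ to contain both $a$ and $b$, which is precisely the first alternative of the lemma, and nothing further is needed.

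For the bijective case I pass to the difference sequence $(Dx)^{}_m:=x^{}_{m+1}-x^{}_m$, read in $\ZZ/2\ZZ$ after identifying $a=0$ and $b=1$. Bijectivity gives $\varrho(x)^{}_{Ln+j}=(w^{}_0)^{}_j+x^{}_n$, and a short computation then shows $D\circ\varrho=\theta\circ D$, where $\theta$ is the constant-length-$L$ substitution sending $s$ to the fixed word $D w^{}_0$ of length $L-1$ followed by the single letter $(w^{}_0)^{}_0-(w^{}_0)^{}_{L-1}+s$. Its first $L-1$ columns are coincidences and its last is bijective, so $\theta$ is mixed. Hence the difference of any non-constant periodic point $u$ of $\varrho$ is a periodic point of a power of $\theta$, so it is constant by the claim; it cannot be the all-$0$ sequence (that would make $u$ constant), so it is the all-$1$ sequence and $u$ is the alternating sequence. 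Thus the hull is the two-point orbit of the alternating sequence, and its $\varrho$-invariance forces $w^{}_0\,\overline{w^{}_0}\,w^{}_0\,\overline{w^{}_0}\cdots$ to be alternating, which holds exactly when $w^{}_0$ is an alternating word of odd length $L=2m+1$. This yields $\varrho=\bigl((ab)^{m}a,(ba)^{m}b\bigr)$ or $\varrho=\bigl((ba)^{m}b,(ab)^{m}a\bigr)$, the two remaining alternatives; the only delicate point here is the conjugacy $D\circ\varrho=\theta\circ D$ for two-sided sequences, where the placement of the origin block contributes a harmless bounded shift.
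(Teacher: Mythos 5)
Your argument is correct, but it is a genuinely different proof from the one in the paper. The paper proceeds by direct case analysis on the first and last letters of $w^{}_0$ and $w^{}_1$ (after rotating a common prefix to the suffix via conjugation of substitutions), and in each case that is to be excluded it exhibits a proximal pair of fixed points, invoking the standard criterion that a primitive substitution with a proximal pair has an aperiodic hull. You instead (i) reduce to a genuine two-sided fixed point $v^{\infty}$ of a power of $\varrho$ and extract the arithmetic identity $\varrho(v)=v^{L}$, (ii) prove the standalone fact that a substitution with both a coincidence column and a bijective column admits no non-constant shift-periodic fixed point --- the bijective column forces $\gcd(L,p)=1$ via the fibres of $i\mapsto iL+j^{}_0 \bmod p$, and the coincidence column then forces constancy --- and (iii) dispose of the bijective case by the semiconjugacy $D\circ\varrho=\theta\circ D$ onto a mixed substitution $\theta$, which forces the fixed point to be the alternating sequence and hence $w^{}_0$ to be alternating of odd length. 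All steps check out: mixedness does pass to powers (a composition of column maps is constant as soon as one factor is, and bijective iff all are), the hull of a primitive substitution cannot contain a constant sequence, and the junction condition at the end correctly yields $L=2m+1$. What your route buys is independence from the proximality criterion of \cite[Cor.~4.2]{TAO} and two reusable ingredients: the coincidence/bijective dichotomy for periodic fixed points (which works over larger alphabets so long as one column is constant and one is a permutation) and the difference-operator conjugacy, which is specific to the binary case but explains structurally why only the alternating substitutions survive. The paper's proof, by contrast, is more elementary in that it only inspects seeds and their iterates, at the cost of an exhaustive enumeration of prefix/suffix patterns.
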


\begin{proof}
  Clearly, any substitution $\varrho = (w,w)$ with
  $\lvert w \rvert >1$ defines a periodic hull, and primitivity
  implies that $w$ must contain both letters. Consequently, we can now
  focus on $\varrho = (w^{}_{0} , w^{}_{1})$ with
  $w^{}_{0} \ne w^{}_{1}$. Let us begin with the cases of equal
  prefix.

  If $\varrho = (r u s, r v s)$, where
  $\lvert u \rvert = \lvert v \rvert$ and arbitrary
  $r,s \in \vS_2$, we may consider
  $\varrho^{\ts\prime} = (s \ts r u, s \ts r v)$ instead,
  because $\varrho$ and $\varrho^{\ts \prime}$ are conjugate and thus
  define the same hull \cite[Prop.~4.6]{TAO}. Since we only
  consider the case $w^{}_{0} \ne w^{}_{1}$, we must have at least one
  position where they differ, and we may assume that this happens at
  the last position.

  For $\varrho = (a u a, avb)$, the words $ab$ and $ba$ are both legal
  (as $u$ must contain the letter $b$ by primitivity), and an
  iteration of the corresponding seeds under $\varrho$ results in the
  sequences
 \[
\begin{array}{ccccccc}
    a|b  & \mapsto & \ldots a|a \ldots &  \mapsto &
    \ldots a|a \ldots & \mapsto & \cdots  \\
    \ts b\ts |a  & \mapsto & \ldots b\ts |a \ldots & \mapsto &
    \ldots b\ts |a \ldots & \mapsto & \cdots
\end{array}
\]
that converge to two-sided fixed points. Since they are proximal
(equal to the right, but not to the left) by construction, the hull of
$\varrho$ must be aperiodic \cite[Cor.~4.2]{TAO}. A completely
analogous argument works for $\varrho = (bua, bvb)$, which is again
aperiodic.

Likewise, for $\varrho = (a u b, a v a)$, the word $ab$ is legal,
hence also $ba$. Using the latter as seed, we get the iteration
\[
   b|a \; \mapsto \; \ldots a|a \ldots 
   \; \mapsto \; \ldots b|a \ldots \; \mapsto \; \cdots
\]
that ultimately alternates between two elements that form a proximal
pair, which implies aperiodicity. Analogously, for
$\varrho = (bub, bva)$, we get a proximal pair (and hence
aperiodicity) from an iteration that starts with the legal seed $b|a$,
which is mapped to $a|b$ and then alternates between $b|b$ and $a|b$.

Consequently, periodic cases for $w^{}_{0} \ne w^{}_{1}$ can only
occur if the two words have unequal prefix \emph{and} unequal suffix. When
$\varrho = (aub, bva)$, the seed $b|a$ is legal, which under iteration
alternates between $a|a$ and $b|a$; when $\varrho = (bua, avb)$, one
has the matching situation with $a|b$ and $a|a$, so both cases possess
proximal pairs and are thus aperiodic.

Finally, if $\varrho= (aua, bvb)$, one gets a proximal pair if and
only if $aa$ or $bb$ is legal, and the same statement applies to
$\varrho^{\ts \prime} = (bub, ava)$. The only way this can be avoided
is if $w^{}_{0}$ and $w^{}_{1}$ both alternate between $a$ and $b$,
which indeed gives periodic hulls, and these substitutions are the two
other cases stated.
\end{proof}

\begin{theorem}\label{thm:per0} 
  If the primitive, binary constant-length substitution\/ $\varrho$
  defines a periodic hull, the extremal Lyapunov exponents satisfy\/
  $\, \chi^{B}_{\min} = \chi^{B}_{\max} = 0$.
\end{theorem}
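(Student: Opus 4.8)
The plan is to reduce to the explicit normal forms supplied by Lemma~\ref{lem:subst-per} and to treat the two resulting families separately, since one of them renders the cocycle degenerate. In every case the equality $\chi^{B}_{\min}=0$ will essentially come for free, so the real content is the identity $\chi^{B}_{\max}=0$, which by Theorem~\ref{thm:min-max} amounts to showing $\fm(Q-R)=0$ whenever $\varrho$ has a periodic hull. My strategy is therefore to read off $Q-R$ from the column data in each normal form and to recognise it as a polynomial all of whose roots lie on the unit circle, so that $\fm(Q-R)=0$ follows from Kronecker's theorem (equivalently, from Jensen's formula, exactly as for $p^{}_L$ in the proof of Theorem~\ref{thm:min-max}).

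First I would dispatch the two bijective families $\varrho=\bigl((ab)^m a,(ba)^m b\bigr)$ and $\varrho=\bigl((ba)^m b,(ab)^m a\bigr)$. Here $L=2m+1$ is odd and the columns strictly alternate between the two bijective types, so that $P^{}_a$ and $P^{}_b$ are precisely the even and the odd positions in $\{0,\dots,L-1\}$, in one order or the other. A direct evaluation of $Q=\overline{\widehat{\delta_{P_a}}(k)}$ and $R=\overline{\widehat{\delta_{P_b}}(k)}$ then gives
\[
   (Q-R)(z) \, = \, \pm\sum_{j=0}^{L-1}(-z)^j \, = \, \pm\,\frac{1+z^{L}}{1+z} \ts ,
\]
whose zeros are exactly the $(2L)$-th roots of unity that are not $L$-th roots of unity, hence all lie on the unit circle. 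Consequently $\fm(Q-R)=0$, and since the cocycle is invertible for almost every $k$ (because $\det(B(k))=p^{}_L(z)\,(Q-R)(z)$ vanishes only at finitely many roots of unity, by Lemma~\ref{lem:mat}), Theorem~\ref{thm:min-max} applies verbatim and yields $\chi^{B}_{\min}=0=\chi^{B}_{\max}$.

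The remaining, and genuinely different, case is $\varrho=(w,w)$. Now every column is a coincidence, so $P^{}_a=P^{}_b=\varnothing$ and $Q-R\equiv 0$; in particular $\det(B(k))\equiv 0$, and Theorem~\ref{thm:min-max} no longer applies, since the cocycle is nowhere invertible. I would instead argue directly: writing $u(z)=\bigl(S^{}_0(z),S^{}_1(z)\bigr)^{\mathsf{T}}$, the Fourier matrix factors as the rank-one product $B(k)=u(z)\,(1,1)$, with $(1,1)$ a left eigenvector of eigenvalue $S^{}_0+S^{}_1=p^{}_L$. Telescoping the cocycle collapses the interior factors to scalars, giving
\[
   B^{(n)}(k) \, = \, \Bigl(\,\prod_{\ell=1}^{n-1} p^{}_L\bigl(\ee^{2\pi\ii L^{\ell}k}\bigr)\Bigr)\, u(z)\,(1,1) \ts ,
\]
so that the unique nonzero singular value grows at the rate $\fm(p^{}_L)=0$, by Birkhoff's theorem exactly as in the proof of Theorem~\ref{thm:min-max}; hence $\chi^{B}_{\max}=0$. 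The main obstacle is the bookkeeping for $\chi^{B}_{\min}$ in this degenerate case: the rank-one structure forces a $k$-dependent kernel direction on which $B^{(n)}(k)$ vanishes identically, so one must either restrict to the invertible part of the cocycle or adopt the convention that the relevant finite exponents are those realised on the complement of this kernel. With either reading the single surviving exponent equals $0$, matching $\chi^{B}_{\min}=\chi^{B}_{\max}=0$ and completing the proof.
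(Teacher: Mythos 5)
Your proof is correct and follows essentially the same route as the paper: reduce to the normal forms of Lemma~\ref{lem:subst-per}, show $Q-R$ is cyclotomic in the alternating bijective cases (the paper gets this from $(Q-R)(z)=\pm(Q+R)(-z)$ rather than your explicit $\pm(1+z^L)/(1+z)$, but it is the same fact), and collapse the rank-one cocycle to a scalar product of $p^{}_L$-values in the case $\varrho=(w,w)$. The only cosmetic difference is that the paper telescopes $v B^{(n)}(k)$ for an arbitrary \emph{fixed} $v\neq 0$, so the kernel direction you worry about only costs a countable (hence null) set of $k$ for each $v$, and no extra convention about the degenerate exponent is needed.
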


\begin{proof} 
  In view of Lemma~\ref{lem:subst-per}, we have to check the claim for
  three cases.  When $\varrho = (w,w)$, where $w$ contains both
  letters and $L = \lvert w \rvert \geqslant 2$, we consider an
  arbitrary starting vector $v = (\alpha, \beta) \ne 0$.  For $n>1$,
  one then has
\[
    v B^{(n)} (k) \, = \,
   \bigl( \alpha \ts S^{}_{0} \bigl(\ee^{2 \pi \ii k}\bigr) +
   \beta \ts S^{}_{1} \bigl(\ee^{2 \pi \ii k}\bigr) \bigr)
   \cdot (1,1) \ts B^{(n-1)} (L k) \ts ,
\]
where $(1,1)$ is a left eigenvector of $B^{(n-1)} (L k)$. Since
$S^{}_{0} + S^{}_{1} = p^{}_{L}$ in this case, one finds
\[
    \| v B^{(n)} (k) \| \, = \, \bigl| \alpha \ts S^{}_{0}
    \bigl( \ee^{2 \pi \ii k} \bigr)
    + \beta \ts S^{}_{1} \bigl( \ee^{2 \pi \ii k} \bigr)
    \bigr| \, \| (1,1) \|  \prod_{\ell=1}^{n-1}
   \bigl| p^{}_{L} \bigl( \ee^{2 \pi \ii L^\ell k} \bigr) \bigr| 
   \ts .
\]
The first term on the right-hand side only vanishes at isolated (and
hence countably many) values of $k$, which we may exclude. Then, a
calculation with Birkhoff averages shows that, for almost every
$k\in\RR$, we get
\[
   \lim_{n\to\infty} \myfrac{1}{n} \log  
   \| v B^{(n)} (k) \| \, = \, \fm (p^{}_{L} ) \, = \, 0 \ts ,
\]
which establishes the claim in this case.

If $\varrho$ is bijective, we have $L=2m+1$ for the two remaining
cases by Lemma~\ref{lem:subst-per}. In line with our previous
reasoning, the Fourier matrix is of the form
\[
    B (k) \, = \, \begin{pmatrix}
    Q(z) & R(z) \\ R(z) & Q(z) \end{pmatrix} 
    \quad \text{with} \, z = \ee^{2 \pi \ii k} ,
\]
where the polynomials $Q$ and $R$ satisfy
$Q(z) + R(z) = p^{}_{L} (z) = 1 + z + \cdots + z^{2m}$, which is
cyclotomic, so that $\fm (Q+R) = 0$. Also, due to the alternating
structure of $\varrho (a)$ and $\varrho(b)$, one has
$\bigl( Q-R \bigr) (z) = \pm \bigl( Q+R \bigr) (-z)$. This means that
$Q-R$ is cyclotomic as well, and $\fm (Q-R) = 0$. Now, one sees that
\[
    (1, \pm 1) B^{(n)}(k) \, = \, (1, \pm 1)
    \prod_{\ell=0}^{L-1} ( Q \pm R ) 
    \bigl(\ee^{2 \pi \ii  L^\ell k} \bigr) 
\]
which, for almost every $k\in\RR$, gives the two exponents as
$\fm (Q+R) = 0$ and $\fm (Q-R) = 0$ by a simple calculation as in
Remark~\ref{rem:more}.  This implies our claim for these two cases.
\end{proof}

The converse of Theorem \ref{thm:per0} does not hold. For example,
both the Thue--Morse and the period doubling substitutions, given in
Examples~\ref{thue} and \ref{pd}, have
$\chi^{B}_{\min} = \chi^{B}_{\max}=0$; this means that the norm of the
resulting vector after applying their respective cocycles to any
starting vector $v$ does not grow exponentially. However, one must be
careful here, as zero Lyapunov exponents do not exclude
sub-exponential growth behaviour.

%%%%%%%%%%%%%%%%%%%%%%%%%%%%%%%%
\section{Examples: From polynomials to substitutions}\label{sec:egs}
%%%%%%%%%%%%%%%%%%%%%%%%%%%%%%%%

Theorem~\ref{main}, and the more specific Theorem~\ref{thm:min-max},
allow one, given a binary constant-length substitution, to write down
a polynomial whose logarithmic Mahler measure determines the maximal
Lyapunov exponent related to the substitution. But the nature of our
results allows one to go the other way as well, as we now do. We
explain how, given a specific polynomial, one can build a substitution
associated to the maximal Lyapunov exponent for the cocyle
$B^{(n)}(k)$. We also comment on the essential uniqueness of these
substitutions and the properties of their Fourier matrices. We focus
on specific classes of height-$1$ polynomials that have been important
in the study of the logarithmic Mahler measure in the context of
Lehmer's problem.

\begin{example}[Littlewood polynomials]\label{ex:Little}
  Recall that a polynomial $q(z)=\sum_{m=0}^{n-1} c^{}_m \ts z^m$ of
  degree $n-1$ with coefficients $c_m\in \{ -1,1 \}$ is called a
  \emph{Littlewood polynomial of order\/ $n-1$}; see \cite{BorJ,BorM,
    Moss2003}. As before, let $\cC_m$ be the $m^{\text{th}}$
  column of $\varrho$.  Starting with the polynomial, we choose
  $\cC_m$ to be
\[
   \cC_m \, = \, \begin{cases}
   \mbox{\raisebox{1pt}{$\begin{bsmallmatrix}   0 \\ 1
   \end{bsmallmatrix}$}},  & \text{if } c^{}_m=1 , \\
   \mbox{\raisebox{1pt}{$\begin{bsmallmatrix}   1 \\ 0
   \end{bsmallmatrix}$}},  & \text{if } c^{}_m=-1 ,
   \end{cases}
\]
and we build the substituted words for $0$ and $1$ by looking at the
concatenation $\cC^{}_0 \, \cC^{}_1 \! \cdots \cC^{}_{L-1}$. Since
there are only two possible column types, we see immediately that the
sets $P^{}_a$ and $P^{}_b$ from Lemma~\ref{lem:mat} satisfy
\[
    P^{}_a \cup P^{}_b    \, = \,  \{0,1,\ldots , L-1\}\, ,
\]
and also that the resulting
substitution $\varrho$ must be bijective.  By construction, we have
\begin{equation}\label{eq:tmp}
   \chi^{B}_{\max}\, =\, \fm (Q-R)\, = \, \fm (q)
\end{equation}
for the cocycle defined by the Fourier matrix associated to $\varrho$,
which explicitly reads
\[
   B (k) \, = \, \begin{pmatrix}
       Q (z) & R (z) \\ R (z) & Q (z) \end{pmatrix}
        \quad \text{where } z=\ee^{2 \pi \ii k}.
\]
Note that, in this case, $\chi^{B}_{\max}$ can also be calculated by
observing that $(1,-1)$ is a $k$-independent left eigenvector of
$B (k)$ with eigenvalue $( Q - R ) \bigl( \ee^{2 \pi \ii k} \bigr)$,
thus also giving \eqref{eq:tmp}.

The substitution corresponding to $q = Q - R$ is
essentially unique, up to the obvious freedom that emerges from the
relation $\fm (-q) = \fm (q)$, that is, from changing all signs. This
is the case because a given sequence of signs uniquely
specifies the columns of $\varrho$.  For example, let us consider the
polynomial $q(z)=-1-z+z^2-z^3+z^4$, where we get the substitutions
\[
   \varrho_{q} :\begin{cases}
   0\mapsto 11010\\
   1\mapsto 00101
   \end{cases}
   \quad \text{and} \quad \;\;
   \varrho_{-q} :\begin{cases}
   0\mapsto 00101 \\
   1\mapsto 11010
   \end{cases}
\]
with asssociated Fourier matrices
\[
   B_{q} (k) \, = \, \begin{pmatrix}
   \ee^{4\pi \ii k}+\ee^{8\pi \ii k} & 
   1+\ee^{2 \pi \ii k}+\ee^{6 \pi \ii k}\\
   1+\ee^{2 \pi \ii k}+\ee^{6 \pi \ii k} & 
   \ee^{4\pi \ii k}+\ee^{8\pi \ii k} 
   \end{pmatrix} 
   \quad \text{and} \quad
   B_{-q} (k) \, = \, \begin{pmatrix}
   0 & 1 \\ 1 & 0 \end{pmatrix} B_{q} (k) \ts .
\]
Both induce a cocycle whose maximum Lyapunov exponent
is \begin{flalign*} &&
  \chi^{B}_{\max} \, = \, \fm (q)
  \, \approx \, 0.656 {\ts} 256 \ts .&&
  \mbox{\exend}
\end{flalign*}
\end{example}

The Fourier matrices associated to bijective substitutions enjoy
further properties such as simultaneous diagonalisibility and a
$k$-independent expression for the Oseledec splitting.  In these
cases, one always has $\cV_1= \CC \ts (1,1)$.  This means that, for
all vectors $v \ne 0$ in this subspace, the asymptotic exponential
growth rate is $0$, for almost every $k \in \RR$.  One also sees that
every column sum and every row sum of $B(k)$ is the cyclotomic
polynomial $p^{}_L $; for rows it is due to the bijectivity of the
substitution, for columns it follows from the constant-length
property. 

Before we continue, let us mention that adding coincident positions to
a given constant-length substitution as prefix or suffix does not
change the Lyapunov exponents. Conversely, any primitive, binary
constant-length substitution either starts and ends with bijective
positions, or can be conjugated into a substitution that either has
coincident prefix or suffix positions, but not both. The period
doubling case is an example of this. To avoid pathologies, we now
restrict our attention to substitutions which do not end with a
coincidence.

\begin{example}[Newman polynomials]
  For $\,\{ 0,1 \}$-polynomials with constant term~$1$,
  also known as \emph{Newman polynomials} \cite{Moss2003}, one has
  $R = 0$, so the associated Fourier matrix is
\[
     B(k) \, = \, \begin{pmatrix} \bigl( 
    S^{}_0 + Q \bigr) (z) & S^{}_{0}(z) \\
      S^{}_1(z)  & \bigl(S^{}_1 +  Q \bigr) (z) 
   \end{pmatrix} \quad \text{with } \, z = \ee^{2 \pi \ii k} ,
\]
which leads to $\chi^{B}_{\max} = \fm (Q)$ by
Theorem~\ref{thm:min-max}.  If either $S^{}_{0}$ or $S^{}_{1}$ is
zero, $M_\varrho = B(0)$ is a triangular matrix, and cannot be
primitive. This can be avoided if at least two coefficients of the
polynomial are zero. If there is only one, we can still construct a
primitive substitution by recalling that $\fm (-q) = \fm (q)$, so we
only need to exchange the two bijective column types.
 
As a concrete example, consider $q(z) = 1 + z^2$. The two standard choices
\[
   \varrho^{}_q:\begin{cases} 0 \mapsto 000\\ 
   1 \mapsto 101\end{cases}\quad \text{and}
   \qquad \varrho^{}_{q'}:\begin{cases} 0 \mapsto 010\\ 
  1 \mapsto 111\end{cases}
\] 
both give non-primitive substitutions; in fact, their substitution
matrices are not even irreducible. However,
\[
   \varrho^{}_{-q}:\begin{cases} 0 \mapsto 101\\ 
   1 \mapsto 000\end{cases}\quad\text{and}
   \qquad \varrho^{}_{-q'}:\begin{cases}0 \mapsto 111\\ 
   1 \mapsto 010\end{cases}
\]
are both primitive and aperiodic, and have $\chi^{B}_{\max} = \fm (q)$.
One can see in this example that replacing $q$ by $-q$ really
means an exchange of $w^{}_{0}$ and $w^{}_{1}$ in the definition of
$\varrho$.

As another example, consider the reciprocal Newman polynomial
\[
    q(z) \, = \, 1 + z^3 + z^4 + z^5+z^6+z^7+z^8+z^9 +
      z^{10} + z^{11} + z^{14}
\]
taken from \cite[p.~1375]{Boyd-1}. One choice for a substitution
(with $L=15$) is
\[
   \varrho^{}_{q}  :  \begin{cases}
     0\mapsto 010000000000000 \\
     1\mapsto 110111111111001
    \end{cases}
\]
which means $S^{}_1 (z) = z$ and $S^{}_0 (z) = z^2 + z^{12} + z^{13}$,
together with $Q = q $. Here, one has
$\fm (q) \approx \log (1.265 {\ts} 122 )$.  This value is strictly
smaller than $\log ( \lambda_{\mathrm{p}} )$, where
$\lambda_{\mathrm{p}} \approx 1.324 {\ts} 718$ is the \emph{plastic
  number} as described in the Introduction.
Recall that $\log ( \lambda_{\mathrm{p}} )$ is the
sharp lower bound for $\fm (p)$ over all non-reciprocal
integer polynomials $p$ that are not a product of a monomial with
a cyclotomic polynomial. \exend
\end{example}

Note that, when associating a polynomial to a binary constant-length
substitution $\varrho$, it is only the bijective columns of $\varrho$
that are determined by the non-zero coefficients. Thus, we can extend
the construction to generic height-$1$ polynomials, even when the
constant term is zero, as in the period doubling example. However,
when interested in non-trivial logarithmic Mahler measures, one can
assume that the constant coefficient is non-zero.

\begin{example}[Borwein polynomials] 
  When considering Borwein polynomials, with non-zero constant
  coefficients, one can choose the columns of an associated
  substitution just as in Example~\ref{ex:Little}, but with the
  additional freedom to vary the choice for each zero coefficient. As
  in Example~\ref{ex:Little}, starting with the polynomial, we choose
  $\cC_m$ to be
\[
   \cC_m \, = \, \begin{cases}
   \mbox{\raisebox{1pt}{$\begin{bsmallmatrix}   0 \\ 1
   \end{bsmallmatrix}$}},  & \text{if } c^{}_m=1 , \\
   \mbox{\raisebox{1pt}{$\begin{bsmallmatrix}   1 \\ 0
   \end{bsmallmatrix}$}},  & \text{if } c^{}_m=-1 ,\\
   \mbox{\raisebox{1pt}{$\begin{bsmallmatrix}   0 \\ 0
   \end{bsmallmatrix}$}} \mbox{ or } 
   \mbox{\raisebox{1pt}{$\begin{bsmallmatrix}   1 \\ 1
   \end{bsmallmatrix}$}},  & \text{if } c^{}_m=0 ,
   \end{cases}
\]
and we build the substituted words for $0$ and $1$ by looking at the
concatenation $\cC^{}_0 \, \cC^{}_1 \! \cdots \cC^{}_{L-1}$.  There
are two choices for each zero coefficient of the polynomials, so that,
if $p$ is a Borwein polynomial of degree $L-1$ with $n$ zero
coefficients, there are $2^n$ distinct binary constant-length
substitutions of length $L$ whose maximal Lyapunov exponents are all
given by $\fm (p)$. On top of this freedom, we can also still work
both with $p$ or with $-p$, as we saw earlier.

As a concrete example, we consider Lehmer's polynomial
$\ell^{}_{\mathrm{L}}$ from \eqref{eq:def-L}, where
$c^{}_2 = c^{}_8 = 0$. Recall from the Introduction that this
polynomial is irreducible, and has precisely one root outside the unit
disk, which is real and Salem.  Recall further that
$\ell^{}_{\mathrm{L}}$ is the polynomial with the smallest known
positive logarithmic Mahler measure,
$\fm (\ell^{}_{\mathrm{L}}) \approx \log (1.176 {\ts} 281 )$.  Here,
\[
     \varrho^{}_{\ell^{}_{\mathrm{L}}}  :  \begin{cases}
     0\mapsto 00111111000 \\
     1\mapsto 11100000011
    \end{cases}
\]
is one of the eight substitutions that correspond 
to the polynomial $\ell^{}_{\mathrm{L}}$. 
\exend
\end{example}

With this connection and representation, we obtain the following
equivalent reformulation of the strong version of Lehmer's problem for
Borwein polynomials.

\begin{question}
  Does there exist a primitive, binary constant-length substitution
  $\varrho$ with maximal Lyapunov exponent
  $0<\chi^{B}_{\max} < \fm (\ell^{}_{\mathrm{L}}) \approx \log (1.176
  {\ts} 280 {\ts} 818) \,$?
\end{question}

\section{Extensions and outlook}\label{sec:exandout}

Lyapunov exponents are neither restricted to constant-length
substitutions nor to binary alphabets. In fact, there are many
extensions possible; see \cite{BFGR,BGM} and references therein for
more. In general, however, the Lyapunov exponents are no longer
logarithmic Mahler measures themselves, though they often still
satisfy interesting estimates in such a setting.

Moreover, there is actually also a generalisation to higher
dimensions, as briefly stated in \cite{Neil-MFO}. Here, one considers
stone inflation rules of finite local complexity \cite{TAO} and
selects a suitable marker (or reference point) in each prototile (such
that the tiling and the point set are mutually locally derivable 
\cite[Sec.~5.2]{TAO} from each other). One particular class emerges 
from \emph{block substitutions}, such as those discussed in 
\cite{squiral,Natalie}.

\begin{example}
A simple bijective example is given by
\[
    a \, \mapsto \, \mbox{\Large $\begin{smallmatrix}
    b & a \\ a & b \end{smallmatrix} $} \; , \quad
    b \, \mapsto \, \mbox{\Large $ \begin{smallmatrix}
    a & b \\ b & a \end{smallmatrix} $} 
\]
which is primitive and aperiodic.  Here, one can represent the two
letters by unit squares with a (coloured) reference point at their
lower left corners. Then, with $\bs{k} = (k_1, k_2) \in \RR^2 $, one
finds the Fourier matrix
\[
     B (\bs{k}) \, = \, \begin{pmatrix}
     1 +  x \ts y  &  x+y \\ x + y & 1 + x \ts y
     \end{pmatrix}
     \quad \text{where } \, (x,y) =
     \bigl( \ee^{2 \pi \ii k_1} ,  \ee^{2 \pi \ii k_2} \bigr) ,
\]
which satisfies $(1, \pm 1) B(\bs{k}) = \bigl( (1+xy) \pm (x+y)\bigr)
(1, \pm 1)$. Since $1+x+y+x \ts y =(1+x)(1+y)$ and
$1-x-y+x \ts y = (1-x)(1-y)$, all factors are cyclotomic.
Consequently, for almost every $\bs{k}\in\RR^2$, one gets the
Lyapunov exponents as
\[
      \chi^{B}_{\min}  \, = \,   \fm (1 + x + y + x \ts y) = 0
      \quad \text{and} \quad
      \chi^{B}_{\max} \, = \, \fm (1-x-y + x\ts y) = 0 \ts ,
\]     
which resembles Example~\ref{thue} in many ways. Here, in line with
Eq.~\eqref{eq:def-M}, the logarithmic Mahler measure of a
multi-variable polynomial $p$ is defined as
\[
    \fm (p) \, = \int_{[0,1]^d} \log \, \bigl| 
    \ts p \bigl( \ee^{2 \pi \ii t_1}, \ldots ,
    \ee^{2 \pi \ii t_d} \bigr) \bigr| \dd t^{}_{1} \cdots
    \dd t^{}_{d} \ts .
\]

As another example, consider the bijective block substitution
\[
    a \, \mapsto \, \mbox{\Large $\begin{smallmatrix}
    b & a & b \\ a & a & a \\ b & a & b \end{smallmatrix} $} \; , \quad
    b \, \mapsto \, \mbox{\Large $ \begin{smallmatrix}
    a & b & a \\ b & b & b \\ a & b & a \end{smallmatrix} $} 
\]
that emerges from the \emph{squiral tiling} \cite{squiral}. Here,
one has $\chi^{B}_{\min} = \fm \bigl( (1+x+x^2)(1+y+y^2) \bigr) = 0$
as before, while
\[
    \chi^{B}_{\max} \,  = \,  \fm \bigl(x + y (1+x+x^2) + x \ts y^2 - 
    (1 + x^2)(1+ y^2) \bigr)  \, \approx \, 0.723 {\ts} 909
\]
is strictly positive. \exend
\end{example}

It is clear that one can now repeat a lot of our previous analysis for
the class of bijective block substitutions, in any dimension. As is
implicit in \cite{Neil-MFO}, the blocks need not be cubes, as long as
they have length at least $2$ in each direction. We leave further
experimentation along these lines to the interested reader. Outside the
bijective class, interesting new phenomena are possible as follows.

\begin{example}\label{ex:1xy}
Consider the binary block substitution
\[
    a \, \mapsto \, \mbox{\Large $\begin{smallmatrix}
    b & a \\ b & b \end{smallmatrix} $} \; , \quad
    b \, \mapsto \, \mbox{\Large $ \begin{smallmatrix}
    a & a \\ a & a \end{smallmatrix} $} 
\]
which is clearly primitive. It has a coincidence, so that the
higher-dimensional analogue of Dekking's result, see 
\cite{squiral,Bart,Natalie},  implies the pure point spectral 
nature of the corresponding dynamical system (under the 
action of the $\ZZ^2$-shift).
    
Here, the Fourier matrix reads
\[
     B (\bs{k}) \, = \, \begin{pmatrix}
     x \ts y  &  (1+x)(1+y ) \\  1 + x + y & 0
     \end{pmatrix} ,
    \quad \text{where } \, (x,y) =
     \bigl( \ee^{2 \pi \ii k_1} ,  \ee^{2 \pi \ii k_2} \bigr) 
\]
and $\det \bigl( B(\bs{k}) \bigr) = - (1+x)(1+y)(1+x+y)$. As before,
we get $\chi^{B}_{\min} = \fm \bigl( (1+x)(1+y) \bigr) = 0$, while the
sum then satisfies
\[
\begin{split}
    \chi^{B}_{\min} + \chi^{B}_{\max} \, & = \, 
     \chi^{B}_{\max} \, = \, \fm
     \bigl( (1+x)(1+y)(1+x+y) \bigr) \, = \,  \fm ( 1+x+y) \\[1mm]
     & = \, \frac{3 \sqrt{3}}{4 \ts \pi} \,
     L (2,\chi^{}_{-3}) \, =\, L' (-1,\chi^{}_{-3}) 
     \, =  \, 2 \int_{0}^{1/3} \log
     \bigl( 2 \cos (\pi t) \bigr) \dd t
     \, \approx \,  0.323 {\ts} 066 \ts ,
\end{split}
\]
with $L(s,\chi^{}_{-3})$ denoting the Dirichlet $L$-function of the
character $\chi^{}_{-3} (n) = \bigl( \frac{-3}{n}\bigr)$, written in
terms of the Legendre symbol; compare \cite{BoydM}.  This special
value of a Mahler measure also appears in \cite{Smyth81,Smyth}, and
various other relations of this kind are known \cite{BL}, such as
\[  \fm (1+x+y+z) \, = \, \myfrac{7}{2 \ts \pi^2} \, 
   \zeta (3) \ts .
\]
The latter can clearly be realised by a block substitution in three
dimensions.  \exend
\end{example}

\begin{remark}\label{rem:Wannier}
  It seems hardly known that the Mahler measure from
  Example~\ref{ex:1xy}, together with its integral representation,
  first appeared in Wannier's calculation of the groundstate entropy
  of the antiferromagnetic Ising model on the triangular lattice
  \cite{Wannier}. This might be due to the fact that the numerical
  value originally given by him was incorrect, though corrected in an
  erratum $23$ years later. Somewhat similar entropy calculations
  in terms of logarithmic integrals later appeared in various other
  papers on solvable models of statistical physics. 

To be more precise, Wannier gets the entropy $s$ as a
double integral from which we obtain
\[
\begin{split}
   s \, & = \, \myfrac{1}{2} \int_{0}^{1} \int_{0}^{1}
   \log \bigl( 1 + 4\ts \cos(2 \pi v)^2 - 4 \ts
   \cos(2 \pi u) \cos (2 \pi v) \bigr) \dd u \dd v \\[2mm]
  & = \, \myfrac{1}{2}  \int_{0}^{1} \int_{0}^{1}
   \log \big| (1 + y^2 - x \ts y ) (x + x \ts y^2 - y) 
   \big|^{}_{x = \ee^{2 \pi \ii u}, \, y = \ee^{2 \pi \ii v}}
   \dd u \dd v \\[2mm]
  & = \, \myfrac{1}{2} \bigl( \fm (1 + y^2 - x \ts y)
   + \fm (1 + y^2 - x^{-1} y) \bigr)  \, = \,
   \fm (1 + y^2 - x\ts y) \ts .
\end{split}
\]
Now, one clearly has 
\[
   \fm (1 + y^2 - x\ts y) \, = \, 
   \fm (1 + y^2 + x\ts y) \, = \,
   \fm (1 + x + y) \ts ,
\]
via a change of variables $u = u' + \frac{1}{2}$ for the first
identity and the invariance of the Mahler measure under an invertible
linear map with integer coefficients, as in \cite[Exc.~3.1]{EvWa}, which is
given by the matrix $\left(\begin{smallmatrix} 1 & 1 \\ 0 & 2 
\end{smallmatrix}\right)$ in this case.
\exend
\end{remark}

It is well known that the connection between the Mahler measure and
special values of $L$-series has deep roots; in particular, see
\cite{Deninger,B1998}, and \cite{BL} for a survey with several
examples and references. Also, a connection between Mahler measures
and Lyapunov exponents is known from \cite{Deninger-L}.  On the other
hand, our observation shows that these quantities occur in the
spectral theory of dynamical systems as well, in a rather elementary
way, and it seems an interesting problem to analyse this connection
further.  \bigskip

\noindent
\textbf{Acknowledgements}. 
It is our pleasure to thank Franz G\"{a}hler, Michael J.\ Mossinghoff,
Dan Rust and Tom Ward for discussions, and an anomymous referee for a
number of helpful suggestions.  This work was supported by the German
Research Foundation (DFG) through CRC{\ts\ts}1283. The second author
additionally thanks both the University of Bielefeld and the Alfr\'ed
R\'enyi Institute of the Hungarian Academy of Sciences for hosting him
during this research.  \bigskip

\end{document}